\begin{document}

\title{Well-posedness and asymptotic behavior\\ of a multidimensional model of morphogen transport.}

\author{Marcin Ma\l ogrosz\footnote{Institute of Applied Mathematics and Mechanics, University of Warsaw, Banacha 2, 02-097 Warsaw, Poland (malogrosz@mimuw.edu.pl)}
}

\date{for Kate}

\newtheorem{theo}{Theorem}
\newtheorem{lem}{Lemma}
\newtheorem*{rem}{Remark}

\newcommand{\n}[1]{\lVert#1\rVert}
\newcommand{\eq}[1]{\begin{align}#1\end{align}}
\providecommand{\bs}{\begin{subequations}}
\providecommand{\es}{\end{subequations}}
\newcommand{\com}[1]{}

\maketitle

\begin{abstract}
\noindent
Morphogen transport is a biological process, occurring in the tissue of living organisms, which is a determining step in cell differentiation. We present rigorous analysis of a simple model of this process, which is a system coupling parabolic PDE with ODE. We prove existence and uniqueness of solutions for both stationary and evolution problems. Moreover we show that the solution converges exponentially to the equilibrium in $C^{1,\alpha}\times C^{0,\alpha}$ topology. We prove all results for arbitrary dimension of the domain. Our results improve significantly previously known results for the same model in the case of one dimensional domain.

\end{abstract}

\textbf{AMS classification} 35B40, 35Q92

\textbf{Keywords} morphogen transport, asymptotics,  semigroup estimates, bootstrap argument 

\section{Introduction}

Morphogen transport (MT) is a biological process occurring in the bodies of living organisms. It is known that certain proteins (ligands) act as the morphogen - a conceptually defined substance which is responsible for the development of the shape, size and other properties of the cells. According to the 'French flag model' of Wolpert \cite{Wol}, morphogen molecules spread from a localized source through the tissue of newly born individuals and after some time form stable gradients of concentrations. Receptors, located on the surface of the cells, detect those gradients and pass to the kernels the information about levels of morphogen concentration. Then according to these information, certain mechanisms begin synthesis of proteins which finally results in cell differentiation and specialization. Although the role of morphogen gradient in gene expression seems to be widely accepted, the exact kinetic mechanism of its formation is still not known. (see \cite{GB},\cite{KW} and \cite{KPBKBJG-G}).

Recently various models consisting of PDE-ODE systems were proposed to explain MT. Those models assume that movement of morphogen molecules occurs by different types of diffusion or by chemotaxis in the extracellular medium. Reactions with receptors (reversible binding, transcytosis) and various possibilities of degradation and internalization (of morphogens, receptors, morphogen-receptor complexes) are also being considered (see \cite{LNW}, \cite{KLW2}, \cite{BKPG-GJ}, \cite{STW}).

For the case of morphogen Decapentaplegic (Dpp) acting in the wing disc of the Drosophila Melanogaster individuals,  several models have been proposed in \cite{LNW}. 
In this paper we will be concerned with model \textbf{[LNW].B} (Model B \cite{LNW} p786). In this model it is assumed that movement of morphogen molecules occurs by passive diffusion while being affected by reactions of reversible binding with receptors and degradation of morphogen-receptor complexes.
Morphogen is being delivered to the system by secretion from a source localized on one of the boundaries of the domain $\Omega\subset\mathbb{R}^n$, which represents a fragment of the wing tissue. In mathematical terms the model is a system of two differential equations (PDE+ODE equipped with initial and boundary conditions), governing time evolution of the concentrations of free morphogen and morphogen-receptor complexes. 

In case of 1D domains a detailed mathematical analysis of this model was made in \cite{Tel} and \cite{KLW1}. 

In \cite{Tel} the case $\Omega=(0,\infty)$, with a nonlinear dynamic boundary condition at $x=0$ and vanishing boundary condition at $x\to\infty$ is considered. Well-posedness and $L_p(\Omega)$ convergence of the solution to unique steady state were proved.

In \cite{KLW1} the case $\Omega=(0,1)$, with nonhomogeneous, constant Neumann condition at $x=0$ and homogeneous Dirichlet condition at $x=1$ is analyzed. Finding Lyapunov functional allowed to prove well-posedness and $L_2(\Omega)$ exponential convergence to the unique equilibrium, with rate $\chi$ expressed explicitly by the parameters of the model.

The goal of this paper is to examine \textbf{[LNW].B} in the \cite{KLW1} setting for bounded domains of arbitrary dimension $n$. Although $n\in\{1,2,3\}$ is, from the biological point of view, the only relevant case, we do not put this restriction on $n$ (methods that we use do not depend on the dimension). Using fixed point theorem and monotonicity of the nonlinearity we prove that our model has a unique nonnegative steady state. Using theory of analytic semigroups and comparison principle arguments we show existence of classical global solutions. 
We check that Lyapunov functional, obtained in \cite{KLW1}, also works for arbitrary $n$ and thanks to appropriate semigroup estimates and bootstrap arguments we improve the topology of the convergence to the equilibrium from $L_2\times L_2$ to $C^{1,\alpha}\times C^{0,\alpha}$ without losing the exponential rate $\chi$. 

\section{The model}

We consider the system of differential equations governing the space and time evolution of the concentration of free morphogen $l$ and concentration of bounded receptors $s$ in an annular shape domain $\Omega\subset\mathbb{R}^n$. We assume that receptors are distributed uniformly in the tissue so after normalizing the total concentration of receptors (free+bounded) is equal to 1. The model governs the following biological processes
\begin{itemize}
\item Passive diffusion of morphogens in the extracellular medium.
\item Secretion of morphogens from the source on a subset $\Gamma_{N}$ of $\partial\Omega$.
\item Binding of morphogens to receptors.
\item Unbinding of morphogens from receptors.
\item Degradation of bounded morphogens.
\end{itemize}
 
We equip the model with initial conditions $l_{0}, s_{0}$ and boundary conditions on $\Gamma_{D}, \Gamma_{N}$ - two disjoint parts of $\partial\Omega$. On $\Gamma_{N}$ we consider nonhomogeneous, time independent, nonnegative Neumann condition (flow of morphogen into the domain) while on $\Gamma_{D}$ we put homogeneous Dirichlet condition (far from the source of morphogen their impact on the whole process is negligible).
After normalization we end up with the following model
\newline
\\
\textbf{[LNW].B}
\begin{align*}
\partial_t l-D\Delta l&=\delta s-l(1-s) && ,(t,x)\in(0,\infty)\times\Omega\\
\partial_t s&=-(\delta+\epsilon)s+l(1-s) && ,(t,x)\in(0,\infty)\times\Omega\\
-D\nabla_{n}l&=-\nu && ,(t,x)\in(0,\infty)\times\Gamma_{N}\\
l&=0 && ,(t,x)\in(0,\infty)\times\Gamma_{D}\\
l(0)&=l_{0}&& ,x\in\Omega\\
s(0)&=s_{0}&& ,x\in\Omega
\end{align*}

where we denote the derivative in the direction of the outer normal vector to $\Gamma_{N}$ by $\nabla_{n}$.

\section{Results}

In the whole paper we assume that
\begin{description}\label{A}
\item[\textbf{A1}] $n\in\mathbb{N},\ p>n\geq1$.
\item[\textbf{A2}] $\Omega\subset\mathbb{R}^n$ is a 
bounded domain (open, connected) with ($\mathcal{C}^{1,1}$) boundary which consists of two disjoint parts: $\partial\Omega=
\Gamma_D\sqcup\Gamma_N$.
\item[\textbf{A3}] $0\leq\nu\in W_p^{1-1/p}(\Gamma_N)$.
\item[\textbf{A4}] $l_0,s_0\in W^1_{p}(\Omega);\quad 0\leq l_{0}(x),\quad0\leq s_{0}(x)<1$, for $x\in\Omega;\quad l_{0}(x)=s_{0}(x)=0$, for $x\in\Gamma_{D}
$.
\end{description}

Under the above assumptions we first analyze the stationary problem and prove the following
\
\newline

\begin{theo}
\label{equilibrium}
\textbf{[LNW].B} has unique nonnegative steady state $(l_{\infty},s_{\infty})$, where \\ $0\leq l_{\infty}\in W^{2}_{p}(\Omega)$ is the unique solution to
\bs{\label{steady}}
\eq{
-D\Delta l_{\infty}&=-\frac{\epsilon l_{\infty}}{\delta+\epsilon+l_{\infty}}&&, x\in\Omega\label{steady:a}\\ 
-D\nabla_n l_{\infty}&=-\nu&&,x\in\Gamma_{N}\label{steady:b}\\
l_{\infty}&=0&&,x\in\Gamma_{D}\label{steady:c}.
}
\es
and $s_{\infty}=l_{\infty}/(\epsilon+\delta+l_{\infty})$.
\end{theo}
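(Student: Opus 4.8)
The plan is to establish the steady-state system by first eliminating the ODE variable algebraically, then solving the resulting scalar elliptic problem for $l_\infty$, and finally recovering $s_\infty$.

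Setting the time derivatives to zero in \textbf{[LNW].B} gives two algebraic/differential relations. First I would use the (former) ODE: setting $\partial_t s = 0$ yields $0 = -(\delta+\epsilon)s_\infty + l_\infty(1-s_\infty)$, which I solve for $s_\infty$ in terms of $l_\infty$. Rearranging gives $s_\infty(\delta+\epsilon+l_\infty) = l_\infty$, hence $s_\infty = l_\infty/(\delta+\epsilon+l_\infty)$, exactly the claimed formula. Substituting this into the steady version of the PDE ($\partial_t l = 0$, so $-D\Delta l_\infty = \delta s_\infty - l_\infty(1-s_\infty)$) and simplifying the right-hand side should collapse it to $-\epsilon l_\infty/(\delta+\epsilon+l_\infty)$, producing exactly \eqref{steady:a}; the boundary conditions \eqref{steady:b} and \eqref{steady:c} carry over unchanged from the evolution problem. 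So the reduction to the scalar problem \eqref{steady} is a routine computation, and I would present it as such.

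The substantive analytic content is the existence, uniqueness, nonnegativity, and $W^2_p(\Omega)$ regularity of the solution $l_\infty$ to \eqref{steady}. My plan is to treat \eqref{steady} as a semilinear elliptic equation $-D\Delta l_\infty = f(l_\infty)$ with $f(u) = -\epsilon u/(\delta+\epsilon+u)$, under mixed Neumann--Dirichlet boundary data. The key structural fact is monotonicity: for $u \geq 0$ the map $u \mapsto -f(u) = \epsilon u/(\delta+\epsilon+u)$ is nondecreasing, so $-f$ is monotone and $f$ itself acts dissipatively. I would set up a fixed-point argument — either a contraction/Banach scheme on the solution operator of the linear problem $-D\Delta w = g$ with the given boundary conditions, or a monotone-iteration (sub/supersolution) scheme — exploiting that the linear elliptic operator with these boundary conditions is invertible with a nonnegativity-preserving (order-preserving) inverse via the maximum principle. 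Elliptic $L_p$ regularity theory (for $p > n$, as in \textbf{A1}) then upgrades the solution to $W^2_p(\Omega)$, and the Sobolev embedding $W^2_p \hookrightarrow C^{1,\alpha}$ guarantees enough regularity for the later asymptotic analysis. Uniqueness follows from the monotonicity of $f$: if $l_1, l_\infty$ are two solutions, testing the difference against itself and using that $f$ is nonincreasing forces the difference to vanish. Nonnegativity follows from the maximum principle together with the nonnegative Neumann data $\nu \geq 0$ (assumption \textbf{A3}) and homogeneous Dirichlet data.

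The main obstacle I anticipate is handling the nonstandard boundary conditions cleanly. The mixed Neumann (nonhomogeneous, $-D\nabla_n l_\infty = -\nu$ on $\Gamma_N$) and Dirichlet ($l_\infty = 0$ on $\Gamma_D$) data mean the relevant function space is not the full $W^2_p(\Omega)$ but the subspace respecting the Dirichlet trace, and the fixed-point operator must be built on the linear solution map for this mixed problem. Verifying that this linear map is well-defined, bounded, order-preserving, and that its solutions inherit the compatibility needed from assumption \textbf{A3} ($\nu \in W_p^{1-1/p}(\Gamma_N)$, the correct trace space) is where the care lies. Once the linear theory for the mixed problem is in place, the nonlinear existence, the sign, and the $W^2_p$ regularity follow from the standard monotonicity and maximum-principle machinery, and the recovery of $s_\infty$ is immediate from the algebraic formula.
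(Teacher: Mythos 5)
Your algebraic reduction to the scalar problem \eqref{steady} and your uniqueness argument coincide with the paper's: the paper likewise tests the difference of two solutions against itself and uses monotonicity of $x\mapsto \epsilon x/(\delta+\epsilon+x)$ on $[0,\infty)$, concluding via the Dirichlet condition \eqref{steady:c}. For existence, however, you take a genuinely different route. The paper does not iterate at all: it writes the nonlinearity as $f(l_\infty)l_\infty$ with $f(x)=\epsilon/(\delta+\epsilon+x)$, freezes the coefficient, defines $T(u)=w$ through the linear problem \eqref{steady'} (that is, $-D\Delta w+f(u)w=0$ with the mixed boundary data), and applies the Schauder fixed point theorem: the uniform bound $0\le f\le \epsilon/(\delta+\epsilon)$ together with maximal $L_p$ regularity gives $\n{w}_{2,p}\le C\n{\nu}_{W^{1-1/p}_p(\Gamma_N)}$ independently of $u$, compactness comes from $W^2_p(\Omega)\subset\subset L_p(\Omega)$, continuity from the Lipschitz bound on $f$, and invariance of the cone $L_p(\Omega)_+$ is proved by testing with $w_-$. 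Your sub/supersolution branch is a sound alternative: $\underline u\equiv 0$ is a subsolution precisely because $\nu\ge 0$, the solution $\overline u$ of the linear problem with the nonlinearity dropped ($-D\Delta\overline u=0$, same boundary data) is a nonnegative supersolution dominating $\underline u$, and the nonlinearity is Lipschitz and nonincreasing on $[0,\infty)$, so the standard monotone iteration converges and keeps every iterate in $[0,\overline u]$; elliptic $L_p$ regularity then upgrades the limit to $W^2_p(\Omega)$ as you say. What each approach buys: yours is constructive and delivers nonnegativity and the ordering for free; the paper's requires no order structure in the iteration, only compactness, and obtains the uniform $W^2_p$ bound in one stroke from the frozen-coefficient problem.

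Your Banach-contraction branch, by contrast, has a genuine gap and should be dropped. The composition of the solution operator of the linear mixed problem with $u\mapsto -\epsilon u/(\delta+\epsilon+u)$ has Lipschitz constant of order $\n{(-D\Delta)^{-1}}\,\epsilon/(\delta+\epsilon)$, and nothing in \textbf{A1}--\textbf{A4} makes this smaller than one: it degrades for large $\Omega$ or small $D$, so no contraction is available in general. Moreover the nonlinearity has a pole at $u=-(\delta+\epsilon)$, so the map is not even globally defined on $L_p(\Omega)$; any scheme must be confined to the nonnegative cone, which the monotone iteration does automatically and which the paper achieves by proving $T(L_p(\Omega)_+)\subset L_p(\Omega)_+$, but which a naive contraction argument on $L_p(\Omega)$ cannot. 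With the contraction option removed and the sub/supersolution (or Schauder) argument carried out, your outline is complete.
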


The proof of existence is based on maximal regularity for uniformly elliptic operators in Sobolev spaces, compact embedding, comparison principle and Schauder fixed point theorem. Uniqueness follows from monotonicity of the nonlinear part in \eqref{steady:a}.	
\
\newline

We next turn to the evolution problem and establish its well-posedness. 
\
\newline

\begin{theo}
\label{global}
\textbf{[LNW].B} has unique solution $(l,s)$ such that
\bs\label{solution}
\eq{
l-l_{\infty}&\in\mathcal{C}([0,\infty);W^{1}_p(\Omega))\cap\mathcal{C}^{1}((0,\infty);W^{1}_p(\Omega))\cap\mathcal{C}((0,\infty);W^{3}_{p}(\Omega))\label{solution:l}\\
s&\in\mathcal{C}^1([0,\infty);W^{1}_p(\Omega))\label{solution:s}.
}
Moreover for $(t,x)\in[0,\infty)\times\Omega$
\eq{
0\leq l(t,x),\quad 0\leq s(t,x)<1\label{Sign}.
}
\es
\end{theo}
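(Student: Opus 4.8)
The plan is to treat the PDE–ODE system as an abstract semilinear evolution equation in a suitable $W^1_p$-based space, obtain a local solution by the theory of analytic semigroups, extend it globally using a priori bounds coming from the comparison principle, and read off the sign/boundedness estimates $(\ref{Sign})$ along the way. Let me sketch how I would carry this out.

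First I would shift by the equilibrium. Writing $u = l - l_\infty$ and keeping $s$, the homogeneous Dirichlet condition on $\Gamma_D$ and the now-\emph{homogeneous} Neumann condition on $\Gamma_N$ (the inhomogeneity $-\nu$ is absorbed by $l_\infty$ via Theorem~\ref{equilibrium}) make $u$ live in a space with zero boundary flux. The operator $A = -D\Delta$ equipped with these mixed boundary conditions generates an analytic semigroup $e^{-tA}$ on $L_p(\Omega)$, and its fractional-power / interpolation spaces recover the $W^1_p$, $W^2_p$, $W^3_p$ scale appearing in $(\ref{solution:l})$. The coupled system then reads, schematically,
\eq{
\partial_t u + A u &= F(u,s), \qquad \partial_t s = G(u,s),\nonumber
}
where $F,G$ collect the reaction terms $\delta s - l(1-s)$ and $-(\delta+\epsilon)s + l(1-s)$ expressed in the shifted variables. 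Since $s$ obeys a pure ODE (no spatial derivatives), the natural move is to regard $s(t)$ as a fixed parameter, solve the ODE for $s$ by Duhamel against a given $u$, substitute into the $u$-equation, and set up a contraction on $\mathcal{C}([0,T];W^1_p)$. The assumption $p>n$ (so $W^1_p \hookrightarrow \mathcal{C}(\overline\Omega)$ by Sobolev embedding) is exactly what makes the quadratic nonlinearities $l(1-s)$ locally Lipschitz on this space, yielding local existence and uniqueness on some $[0,T_{\max})$.

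The main obstacle, and the crux of the theorem, is upgrading local existence to a \emph{global} solution with the uniform sign bounds $(\ref{Sign})$; these two issues are in fact intertwined, since the a priori bounds that prevent blow-up are the same ones that confine $s$ to $[0,1)$ and $l$ to $[0,\infty)$. Here I would invoke the comparison/maximum principle. For $s$ the ODE structure is decisive: at $s=0$ one has $\partial_t s = l\geq 0$ and at $s=1$ one has $\partial_t s = -(\delta+\epsilon)<0$, so the region $\{0\le s<1\}$ is invariant, keeping the factor $(1-s)$ and hence the whole reaction bounded. For $l$ the parabolic maximum principle applied to the $l$-equation, using $0\le s<1$ and the signs of the boundary data, gives $l\ge 0$ and an a priori $L_\infty$ (hence, by the boundary data and elliptic regularity, $W^1_p$) bound independent of $t$. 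These bounds show the $W^1_p$-norm of the solution cannot blow up in finite time, so $T_{\max}=\infty$.

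Finally I would bootstrap the regularity. Once global boundedness in $W^1_p$ is in hand, the smoothing of the analytic semigroup promotes $u(t)$ into the higher interpolation spaces for $t>0$: parabolic regularity gives $u\in\mathcal{C}((0,\infty);W^3_p)\cap\mathcal{C}^1((0,\infty);W^1_p)$ as claimed in $(\ref{solution:l})$, while $s$, being driven by the now-smooth $l$ through an ODE with $\mathcal{C}^1$-in-time right-hand side valued in $W^1_p$, inherits $s\in\mathcal{C}^1([0,\infty);W^1_p)$, matching $(\ref{solution:s})$. The one point demanding care is verifying the compatibility of the shifted initial data with the boundary conditions so that the semigroup solution indeed attains $u(0)=l_0-l_\infty$ continuously in $W^1_p$ up to $t=0$; assumption \textbf{A4} (with $l_0,s_0\in W^1_p$ vanishing on $\Gamma_D$) is tailored to supply exactly this.
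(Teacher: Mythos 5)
Your outline reproduces the paper's architecture --- shift by the stationary state, local existence from analytic-semigroup theory for abstract semilinear problems (using that $W^1_p$, $p>n$, is an algebra, so the quadratic nonlinearity is locally Lipschitz; the paper invokes Theorem 7.2.1 of \cite{ChD} for this), then sign bounds, then exclusion of finite-time blow-up --- but two of your steps have genuine gaps. The first is that your positivity argument is circular as stated: invariance of $\{s\ge 0\}$ rests on $\partial_t s = l\ge 0$ at $s=0$, i.e.\ it presupposes $l\ge 0$, while your maximum-principle argument for $l\ge 0$ presupposes $s\ge 0$ (the source term $\delta s$ must be nonnegative). For this coupled PDE--ODE system one needs a device that establishes both signs simultaneously. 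The paper breaks the circle by introducing the auxiliary truncated system \eqref{model+}, in which the nonlinearities are evaluated at the positive parts $l'_+,s'_+$; testing \eqref{model+:a} with $l'_-$ and \eqref{model+:b} with $s'_-$ yields $\n{l'_-(t)}_2^2+\n{s'_-(t)}_2^2\le 0$, hence $l',s'\ge 0$, after which $(l',s')$ is seen to solve the original system and uniqueness identifies it with $(l,s)$. (Your argument for $s<1$ is sound and is essentially the paper's: for fixed $x$, $\underline{s}=1-s$ solves a linear ODE with strictly positive forcing, so it stays positive.)

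The second gap is the claim that the $L_\infty$ bound on $l$ gives, ``by the boundary data and elliptic regularity, a $W^1_p$ bound independent of $t$.'' Elliptic regularity is not the right tool for the time-dependent problem, and an $L_\infty$ bound on $l$ does not by itself control $\nabla l$ in $L_p$; what is needed is parabolic smoothing through the Duhamel formula \eqref{Duhamel1}. This is exactly how the paper argues: the uniform bound $0\le s<1$ alone gives $\n{H^1(z(t))}_p\le C(1+\n{z_1(t)}_{1,p})$ (estimate \eqref{H1_estimate}; no $L_\infty$ bound on $l$ is needed), and the singular Gronwall inequality (Lemma \ref{Gronwall}) applied to the Duhamel representation bounds $\n{z_1(t)}_{1,p}$ on finite time intervals. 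Moreover, the blow-up criterion of Lemma \ref{local} is in the norm of $Z_{1,p}$, so you must also prevent blow-up of $\n{s(t)}_{1,p}$, which does not follow from $0\le s<1$; the paper obtains it from the integrated ODE \eqref{Duhamel2}, the estimate \eqref{H2_estimate}, and a second application of Gronwall. Both gaps are repairable by standard means, but the repairs are precisely the substance of the paper's proof, so your sketch cannot be considered complete without them.
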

\
\newline
Local existence and uniqueness are obtained by putting system \textbf{[LNW].B} into the semigroup framework and using general theory for abstract parabolic semilinear problems. Comparison principle allows us to deduce that \eqref{Sign} is satisfied from which we get that our solution is global. 
\
\newline

We finally study the stability of the steady state and show that it attracts all trajectories with the uniform exponential rate.
\
\newline
\begin{theo}
\label{asymptotics}
There exists a positive constant $C$ depending on $l_0,s_0,\nu,\delta,\epsilon,D,\Omega,p$ such that for every $t>0$
\bs
\eq{
\n{l(t)-l_{\infty}}_{1,p}+\n{s(t)-s_{\infty}}_{1,p}&\leq Ce^{-(\chi/2)t}\label{asym1},\\
\n{l(t)-l_{\infty}}_{2,p}&\leq C\max\{1/\sqrt{t},1\}e^{-(\chi/2)t}\label{asym2},
}
where
\eq{
\chi=\min\Big\{D\lambda_{1},\frac{D\lambda_1(\delta+\epsilon)}{2(D\lambda_1+2)}+\frac{\epsilon}{2}\Big\}\label{chi}
}
\es
and $\lambda_{1}$ is defined in Lemma \ref{semigroup}.
\end{theo}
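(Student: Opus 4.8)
The plan is to pass to the perturbations $u=l-l_{\infty}$ and $v=s-s_{\infty}$ and to prove decay in three stages: an $L_2$ estimate produced by a Lyapunov functional, a parabolic bootstrap raising the spatial regularity of $u$, and a transfer of that regularity to $v$ through its ODE, each stage preserving the rate $e^{-(\chi/2)t}$.

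First I would subtract the stationary equations of Theorem \ref{equilibrium} from \textbf{[LNW].B}. A short computation gives the system, linear in $(u,v)$, $\partial_t u-D\Delta u=-(1-s)u+(\delta+l_{\infty})v$, $\partial_t v=(1-s)u-(\delta+\epsilon+l_{\infty})v$, with the homogeneous conditions $-D\nabla_n u=0$ on $\Gamma_N$ and $u=0$ on $\Gamma_D$; by Theorem \ref{global} the coefficients are bounded with $0\le s<1$, and by Theorem \ref{equilibrium} $0\le l_{\infty}\in W^2_p\hookrightarrow C^1$ since $p>n$. I would then take the Lyapunov functional of \cite{KLW1}, a weighted quadratic energy $E(t)=\tfrac12\n{u}_2^2+\tfrac12\int_{\Omega}c\,v^2$ whose weight $c=c(x)$ is comparable to $\delta+l_{\infty}$, differentiate it along the flow, integrate by parts in the diffusion term (the boundary terms vanish), and estimate the indefinite cross term by the Poincar\'e inequality $\n{\nabla u}_2^2\ge\lambda_1\n{u}_2^2$ and Young's inequality. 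The point of the $l_{\infty}$-adapted weight is that the $v$-damping coefficient $\delta+\epsilon+l_{\infty}$ exceeds the coupling coefficient $\delta+l_{\infty}$ by the fixed gap $\epsilon$ for every value of $l_{\infty}$, so that after completing the square the coefficient left on $u^2$ stays bounded independently of $\n{l_{\infty}}_{\infty}$; optimizing the free constant then yields $\tfrac{d}{dt}E\le-\chi E$ with the $l_{\infty}$-free rate \eqref{chi}, whose two entries are the purely diffusive mode (rate $D\lambda_1$) and the reaction-controlled mode, the minimum selecting the slower. Gr\"onwall's lemma gives $\n{u(t)}_2+\n{v(t)}_2\le Ce^{-(\chi/2)t}$.

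The bootstrap, which is the improvement over \cite{KLW1}, comes next. Writing $A$ for the realization of $-\Delta$ under the mixed boundary conditions (Lemma \ref{semigroup}), Duhamel's formula reads $u(t)=e^{-tDA}u_0+\int_0^t e^{-(t-\tau)DA}f(\tau)\,d\tau$ with $f=-(1-s)u+(\delta+l_{\infty})v$, while $v$ solves the pointwise-in-$x$ linear ODE with integrating factor $e^{-(\delta+\epsilon+l_{\infty}(x))t}$. The two tools are the smoothing bound $\n{e^{-sDA}}_{L_q\to W^\theta_q}\le Cs^{-\theta/2}e^{-D\lambda_1 s}$ and the ultracontractive bound $\n{e^{-sDA}}_{L_q\to L_r}\le Cs^{-\frac n2(\frac1q-\frac1r)}e^{-D\lambda_1 s}$. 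Since the minimum in \eqref{chi} forces both $\chi/2<D\lambda_1$ and $\chi/2<\delta+\epsilon$, every Duhamel convolution $\int_0^t(t-\tau)^{-\gamma}e^{-D\lambda_1(t-\tau)}e^{-(\chi/2)\tau}\,d\tau$ with $\gamma<1$, and likewise the ODE convolution against $e^{-(\delta+\epsilon)(t-\tau)}$, reproduces the factor $e^{-(\chi/2)t}$. I would then alternate: a parabolic step raises the integrability and regularity of $u$; the ODE transfers this to $v$, since multiplication by $1-s$ does not smooth but preserves each $L_q$ and, after differentiating and using $\nabla l_{\infty},\nabla s\in L_p$ together with $u\in W^1_p\hookrightarrow L_{\infty}$, each $W^1_p$. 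Finitely many steps reach $u,v\in W^1_p$ with rate $e^{-(\chi/2)t}$, which is \eqref{asym1}. For \eqref{asym2}, once $f\in W^1_p$ with the same rate, the inhomogeneous term needs only a half-derivative of smoothing ($\gamma=1/2<1$) and remains $\le Ce^{-(\chi/2)t}$, whereas the homogeneous term satisfies $\n{e^{-tDA}u_0}_{2,p}\le Ct^{-1/2}e^{-D\lambda_1 t}\n{u_0}_{1,p}$; absorbing $e^{-D\lambda_1 t}$ into $e^{-(\chi/2)t}$ produces exactly the factor $\max\{1/\sqrt t,1\}$.

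The main obstacle is running the iteration in arbitrary dimension without degrading the rate. A single smoothing step from $L_2$ directly to $W^1_p$ carries the exponent $\tfrac12+\tfrac n2(\tfrac12-\tfrac1p)$, which exceeds $1$ once $n$ is large and would make the Duhamel integral diverge; the remedy is to raise the Lebesgue index in several stages so that every individual exponent stays below the critical value $\gamma=1$. The second difficulty is the genuine coupling of $u$ and $v$ inside $f$, resolved by interleaving the parabolic and ODE steps so that at each stage the source already lies in the space just gained for both components. In both places the decisive fact is that the strict inequalities $\chi/2<D\lambda_1$ and $\chi/2<\delta+\epsilon$, consequences of the minimum defining $\chi$, keep every convolution kernel integrable and the exponential rate intact through all finitely many steps.
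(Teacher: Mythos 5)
Your three-stage architecture --- Lyapunov-based $L_2$ decay, a multi-stage $L_{p_i}$ bootstrap keeping every smoothing exponent $\tfrac n2(\tfrac1{p_i}-\tfrac1{p_{i+1}})$ below $1$, then $W^1_p$ and $W^2_p$ gains via Duhamel for the first component and the pointwise ODE for the second, all driven by the strict inequalities $\chi/2<D\lambda_1$ and $\chi/2<\delta+\epsilon$ --- is exactly the paper's proof, and those stages of your argument are sound. (A cosmetic difference: the paper writes the ODE as $\dot z_2+(\delta+\epsilon+l_\infty+z_1)z_2=(1-s_\infty)z_1$, so that in the $W^1_p$ step the gradient falls on the integrating factor and on $(1-s_\infty)z_1$ rather than on $\nabla s$, which spares the extra Gronwall loop your version of the transfer step would require.)

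The genuine gap is in your $L_2$ stage. You replace the Lyapunov functional of \cite{KLW1} by a weighted quadratic energy $E=\tfrac12\n{u}_2^2+\tfrac12\int_\Omega c\,v^2$ with $c\sim\delta+l_\infty$, and assert that completing the square yields $\tfrac{d}{dt}E\le-\chi E$ with the $\chi$ of \eqref{chi}. That step fails. Differentiating $E$ produces the cross term $\int_\Omega[(\delta+l_\infty)+c(1-s)]uv$, and as $s\to1$ the reaction damping $\int_\Omega(1-s)u^2$ vanishes, leaving only the Poincar\'e term $D\lambda_1\n{u}_2^2$ to control $u$. The discriminant condition for absorbing the cross term near $s=1$ forces $c\gtrsim(\delta+l_\infty)^2/\bigl[4D\lambda_1(\delta+\epsilon+l_\infty)\bigr]$, while near $s=0$ the cross term contains $c\,uv$ and forces $c\lesssim4(D\lambda_1+1)(\delta+\epsilon+l_\infty)$; for $D\lambda_1$ small and $\delta$ large these two windows are disjoint, so no exponential rate at all --- let alone the specific rate \eqref{chi} --- comes out of a quadratic functional, whatever weight you choose. (With your choice $c=\delta+l_\infty$ the coefficient left on $u^2$ is indeed bounded, roughly $[1+(1-s)]^2/4\le1$, but then at $s\to1$ you need $D\lambda_1>1/4+\chi/2$, which is not assumed.) This degeneracy is precisely why \cite{KLW1}, and the paper following it, use the entropy $\Sigma_I(s)=-\ln(1-s)$: the associated dissipation $\mathcal D_\Lambda$ carries the weight $1/(1-s)$, which blows up exactly where the quadratic damping dies, and the peculiar constant in \eqref{chi} (note the $D\lambda_1+2$ in the denominator) is the output of that specific computation, via the identity $\Lambda(l(t),s(t))+\int_0^t\mathcal D_\Lambda\,d\tau=\Lambda(l_0,s_0)$ and the inequality $\chi\Lambda\le\mathcal D_\Lambda$. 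Since Theorem \ref{asymptotics} claims decay at the rate $\chi/2$ for this particular $\chi$, your proof as written does not establish the statement; you must either adopt the entropy functional (the paper checks it extends verbatim to arbitrary $n$) or supply a different argument that reproduces \eqref{chi} exactly.
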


By extending Lyapunov functional (derived in \cite{KLW1} for one dimensional interval) to the case of arbitrary dimension we obtain estimates on the distance between solution and steady state in $L_{2}\times L_{2}$ topology. Using regularising properties of the heat semigroup we next bootstrap the topology of convergence to $W^2_p\times W^1_p$.
\\
\newline
\textbf{Remark}\\
Using embedding $W^2_p(\Omega)\times W^1_p(\Omega)\subset C^{1,\alpha}(\Omega)\times C^{0,\alpha}(\Omega)$ valid for $p>n, \ 0\leq\alpha\leq1-n/p$ we obtain topology of convergence as claimed in the introduction.


\section{Notation, semigroup estimates, Gronwall inequality}

For $x,y\in\mathbb{R}$ we denote $x\vee y:=\max\{x,y\},\, x\wedge y:=\min\{x,y\},\,x_{+}:=x\vee 0,\, x_{-}:=(-x)\vee 0$ and extend this notion to real valued functions. If $(V,\geq)$ is partially ordered vector space we denote its positive cone by $V_{+}:=\{v\in V\colon \ v\geq 0\}$. 

We make standard convention that $C$ denotes positive constant which may depend on a subset of $\{l_0,s_0,\nu,\delta,\epsilon,D,\Omega,p\}$ and may change its value from line to line.

For $1<q<\infty, \alpha\in\{1,2,3\}$ we introduce the spaces $W^{\alpha}_{q,\mathcal{B}^{\alpha}}(\Omega)$:
\begin{align*}
W^{1}_{q,\mathcal{B}^1}(\Omega)&=\{u\in W^{1}_{q}(\Omega): u|_{\Gamma_{D}}=0\} 
\\
W^{2}_{q,\mathcal{B}^2}(\Omega)&=\{u\in W^{2}_{q}(\Omega): u|_{\Gamma_{D}}=0,\, \nabla_{n}u|_{\Gamma_{N}}=0 \} \\
W^{3}_{q,\mathcal{B}^3}(\Omega)&=\{u\in W^{3}_{q}(\Omega): u|_{\Gamma_{D}}=0,\, \nabla_{n}u|_{\Gamma_{N}}=0,\, \Delta u|_{\Gamma_{D}}=0\}, 
\end{align*}

with standard Sobolev norms $\n{.}_{\alpha,q}$. 
\newline

We next recall some properties of the heat semigroup generated by laplacian with appropriate boundary conditions. 
\
\newline

\begin{lem}\label{semigroup}
For $1<q<\infty$ the Laplace operator $\Delta\colon L_q(\Omega)\supset W^{2}_{q,\mathcal{B}^2}(\Omega)\to L_q(\Omega)$ generates an analytic, strongly continuous semigroup $e^{t\Delta}$. For $\alpha,\beta\in\{0,1,2,3\},\ \alpha\leq\beta, \ 1<q_1\leq q_2<\infty$ and $t>0$ we have
\bs\label{estimates}
\eq{
\n{e^{t\Delta}u}_{\beta,q}&\leq C(t\wedge 1)^{(\alpha-\beta)/2}e^{-\lambda_{1}t}\n{u}_{\alpha,q}\leq Ct^{(\alpha-\beta)/2}\n{u}_{\alpha,q},&& u\in W^{\alpha}_{q,\mathcal{B}^{\alpha}}\label{estimate:1}\\
\n{e^{t\Delta}u}_{q_2}&\leq C(t\wedge 1)^{-n/2(1/{q_1}-1/{q_2})}e^{-\lambda_{1}t}\n{u}_{q_1}\leq Ct^{-n/2(1/{q_1}-1/{q_2})}\n{u}_{q_1},&& u\in L_{q_1}\label{estimate:2}
}

\es
where $\lambda_{1}>0$ is the first eigenvalue of $-\Delta$ and $C$ depends only on $q,q_1,q_2,\Omega$.
\end{lem}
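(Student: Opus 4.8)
The plan is to treat $A := -\Delta$ with domain $W^2_{q,\mathcal{B}^2}(\Omega)$ as the $L_q$-realization of a regular elliptic boundary value problem and to read off generation, decay and smoothing from the abstract theory of sectorial operators together with Gaussian heat-kernel bounds. First I would establish that $A$ is sectorial. Since $\Omega$ has $\mathcal{C}^{1,1}$ boundary and the boundary operators (Dirichlet on $\Gamma_D$, Neumann on $\Gamma_N$) satisfy the Lopatinskii--Shapiro complementing condition, the mixed problem is regular elliptic; the Agmon--Douglis--Nirenberg resolvent estimates then yield $\|(\lambda+A)^{-1}\|_{\mathcal{L}(L_q)}\le C/(1+|\lambda|)$ in a sector around the positive real axis, so $-A$ generates an analytic semigroup on $L_q(\Omega)$ for every $1<q<\infty$ (Amann; Lunardi; Denk--Hieber--Pr\"uss). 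Strong continuity is automatic because $W^2_{q,\mathcal{B}^2}(\Omega)$ is dense in $L_q(\Omega)$. For the spectral bound I would work first at $q=2$: there $A$ is self-adjoint and, since $\Gamma_D$ carries the homogeneous Dirichlet condition and has positive surface measure by \textbf{A2}, the Poincar\'e inequality gives $\langle Au,u\rangle\ge\lambda_1\|u\|_2^2$ with $\lambda_1>0$ the first eigenvalue, whence $\|e^{-tA}\|_{\mathcal{L}(L_2)}=e^{-\lambda_1 t}$ by the spectral theorem. Since the spectrum of the $L_q$-realization is independent of $q$ (using the kernel bounds below), $\sigma(A)\subset[\lambda_1,\infty)$ for all $q$.

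To obtain the higher-order smoothing \eqref{estimate:1} I would identify the fractional power domains with the Sobolev scale. As $A$ admits bounded imaginary powers (equivalently a bounded $H^\infty$-calculus, available for this class of elliptic operators), the spaces $D(A^{\alpha/2})$ are the complex interpolation spaces between $L_q$ and $D(A^2)$, and for $\alpha\in\{0,1,2,3\}$ they coincide with $W^\alpha_{q,\mathcal{B}^\alpha}(\Omega)$ up to equivalent norms; the conditions $\mathcal{B}^\alpha$ are precisely the trace compatibility conditions forced by the iteration, so that in particular $\Delta u|_{\Gamma_D}=0$ appears in $\mathcal{B}^3$ because $Au\in D(A^{1/2})=W^1_{q,\mathcal{B}^1}$ demands $-\Delta u|_{\Gamma_D}=0$. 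Writing $u=A^{-\alpha/2}v$ and using the analytic smoothing estimate $\|A^{(\beta-\alpha)/2}e^{-tA}\|_{\mathcal{L}(L_q)}\le C(t\wedge 1)^{-(\beta-\alpha)/2}e^{-\lambda_1 t}$, which at $q=2$ follows from $\sup_{\mu\ge\lambda_1}\mu^{(\beta-\alpha)/2}e^{-t\mu}\le C(t\wedge 1)^{-(\beta-\alpha)/2}e^{-\lambda_1 t}$ and transfers to general $q$, I would arrive at the first inequality of \eqref{estimate:1}; the second inequality there is just $(t\wedge 1)^{(\alpha-\beta)/2}e^{-\lambda_1 t}\le t^{(\alpha-\beta)/2}$, valid because $\alpha\le\beta$.

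For the $L_{q_1}$--$L_{q_2}$ estimate \eqref{estimate:2} I would invoke Gaussian upper bounds for the kernel $k_t(x,y)$ of $e^{-tA}$, which hold for uniformly elliptic operators with mixed boundary conditions on such domains (Davies; Ouhabaz): $|k_t(x,y)|\le Ct^{-n/2}\exp(-c|x-y|^2/t)$ for $0<t\le 1$. Young's inequality then gives the small-time bound $\|e^{-tA}\|_{\mathcal{L}(L_{q_1},L_{q_2})}\le Ct^{-\frac{n}{2}(1/q_1-1/q_2)}$ for $0<t\le 1$. For $t\ge 1$ I would factor $e^{-tA}=e^{-A}e^{-(t-1)A}$ and combine the time-$1$ ultracontractive bound $\|e^{-A}\|_{\mathcal{L}(L_{q_1},L_{q_2})}\le C$ with the $L_{q_2}$-decay $\|e^{-(t-1)A}\|_{\mathcal{L}(L_{q_2})}\le Ce^{-\lambda_1(t-1)}$; the two regimes together produce \eqref{estimate:2}.

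The main obstacle I expect is twofold: securing the \emph{exact} rate $\lambda_1$ simultaneously in every $L_q$ (rather than $\lambda_1-\varepsilon$, which the spectral-bound equals growth-bound identity for analytic semigroups would give with a constant blowing up as $\varepsilon\to 0$), and pinning down the identification $D(A^{\alpha/2})\cong W^\alpha_{q,\mathcal{B}^\alpha}$ with its precise boundary conditions. I would resolve the rate by factoring through $L_2$: for large $t$, $e^{-tA}=e^{-A}e^{-(t-2)A}e^{-A}$, with the outer factors mapping $L_q\to L_2$ and $L_2\to L_q$ (ultracontractive, hence bounded) and the middle factor contracting on $L_2$ exactly like $e^{-\lambda_1(t-2)}$, so the sharp rate transfers to $L_q$ up to the constant $C$; the domain identification I would settle via the bounded $H^\infty$-calculus of $A$ and the interpolation characterization above.
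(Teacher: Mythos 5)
Your proposal is correct in outline, but it follows a genuinely different route from the paper's, which is considerably more economical. The paper observes that $-\lambda_1=\sup \mathrm{Re}(\sigma(\Delta))$ and quotes \cite{Lun} for the two endpoint bounds $\n{e^{t\Delta}u}_q\leq M_0e^{-\lambda_1t}\n{u}_q$ and $\n{t(\Delta+\lambda_1I)e^{t\Delta}u}_q\leq M_1e^{-\lambda_1t}\n{u}_q$; elliptic regularity ($\n{v}_{2,q}\leq C\n{\Delta v}_q$) turns these into the $L_q\to W^2_{q,\mathcal{B}^2}$ smoothing estimate, and then a single tool --- Amann's complex interpolation identities $[L_q,W^2_{q,\mathcal{B}^2}]_{\alpha/2}=W^{\alpha}_{q,\mathcal{B}^{\alpha}}$ (extended to $\theta>1$) and $[L_{q_1},W^2_{q_1,\mathcal{B}^2}]_{\theta}\subset L_{q_2}$ for $\theta\geq n/2(1/q_1-1/q_2)$, see \cite{Ama} --- yields both \eqref{estimate:1} and \eqref{estimate:2} at once. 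You instead derive \eqref{estimate:2} from Gaussian heat-kernel bounds plus Young's inequality, and \eqref{estimate:1} from fractional power domains via the $H^\infty$-calculus; these are heavier citations (Gaussian bounds for mixed Dirichlet/Neumann conditions are themselves a nontrivial result of Davies/Ouhabaz type), whereas the paper needs neither kernels nor bounded imaginary powers --- though for $\beta=3$ it must invoke Amann's extension of $[\cdot,\cdot]_\theta$ beyond $\theta=1$, which you avoid by working with $D(A^{3/2})$ directly. Where your argument adds genuine value is the sharp rate: you are right that ``spectral bound equals growth bound'' alone only gives $e^{-(\lambda_1-\varepsilon)t}$ with constants blowing up as $\varepsilon\to0$, and your ultracontractive factorization through $L_2$ is a clean fix; the paper's direct appeal to \cite{Lun} is nevertheless legitimate because $-\lambda_1$ is a semisimple eigenvalue (a simple pole of the resolvent, with the same finite-rank eigenprojection in every $L_q$), so that $\Delta+\lambda_1I$ is sectorial with vertex $0$ --- a point the paper leaves implicit and you make explicit by other means. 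One small slip: your claim $(t\wedge1)^{(\alpha-\beta)/2}e^{-\lambda_1t}\leq t^{(\alpha-\beta)/2}$ is false without a constant (take $t$ slightly above $1$ and $\lambda_1$ small); it holds in the form actually needed because $\sup_{t\geq1}t^{(\beta-\alpha)/2}e^{-\lambda_1t}<\infty$, which the constant $C$ in \eqref{estimate:1} absorbs.
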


\begin{proof}
Noticing that $-\lambda_1=\sup Re(\sigma(\Delta))$ we get from \cite{Lun} following estimates
\begin{align*}
\n{e^{t\Delta}u}_q&\leq M_0e^{-\lambda_1 t}\n{u}_q\\
\n{t(\Delta+\lambda_1I)e^{t\Delta}u}_q&\leq M_1e^{-\lambda_1 t}\n{u}_q
\end{align*}
We have
\begin{align*}
\n{e^{t\Delta}u}_{2,q}&\leq C\n{\Delta e^{t\Delta}u}_q\leq C\n{(\Delta+\lambda_1I)e^{t\Delta}u}_q+C\lambda_1\n{e^{t\Delta}u}_q\\
&\leq C(M_1/t+M_{0}\lambda_1)e^{-\lambda_1t}\n{u}_q\leq C(t\wedge 1)^{-1}e^{-\lambda_1t}\n{u}_q
\end{align*}
From \cite{Ama} we have 
that 
\begin{align*}
[L_q,W^2_{q,B^2}]_{\alpha/2}&=W^{\alpha}_{q,B^{\alpha}}, \ \alpha\in\{0,1,2,3\},\\
[L_{q_1},W_{q_1,B^2}^2]_{\theta}&\subset L_{q_2}, \ \theta\geq n/2(1/q_1-1/q_2),
\end{align*}
where for $\theta\in[0,1] \ [.,.]_{\theta}$ denotes complex interpolation functor, which is extended for $\theta>1$ as described in \cite{Ama}. From this estimates \eqref{estimate:1} and \eqref{estimate:2} follows.
\end{proof}
\
\newline
We next recall the singular Gronwall inequality
\
\newline

\begin{lem}
\label{Gronwall}
Assume that $f\in\mathcal{C}([0,T);\mathbb{R}^{+})$ satisfies for every $t\in[0,T)$ following inequality
$$
f(t)\leq a+b\int_{0}^{t}(t-s)^{-\alpha}f(s)ds,
$$
where $a,b$ are nonnegative constants and $\alpha\in[0,1)$.
Then there exists positive constant $C=C(b,\alpha)$ such that for $t\in[0,T)$
$$
u(t)\leq aC e^{bCt}.
$$
Moreover $C(b,0)=1$.
\end{lem}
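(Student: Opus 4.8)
The plan is to prove the inequality by \emph{iterating} the integral hypothesis, exploiting the fact that repeated convolution against the singular kernel $(t-s)^{-\alpha}$ steadily weakens the singularity until it disappears. Introduce the integral operator $(Bg)(t):=b\int_0^t(t-s)^{-\alpha}g(s)\,ds$, so that the hypothesis reads $f\le a+Bf$, where $a$ is read as the constant function. Since $f$ is continuous and all integrands are nonnegative, every iterate is well defined and Tonelli's theorem licenses the interchange of integration orders below. Substituting the bound into itself $n$ times yields
\[
f(t)\le\sum_{k=0}^{n-1}(B^k a)(t)+(B^n f)(t),\qquad t\in[0,T),
\]
with $B^0a=a$. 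The whole argument reduces to understanding the iterated kernels of $B^k$ and the polynomial forcing terms $B^k a$.

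The computational heart of the proof is the Euler Beta identity
\[
\int_r^t(t-s)^{-\alpha}(s-r)^{\mu-1}\,ds=(t-r)^{\mu-\alpha}\,\frac{\Gamma(1-\alpha)\,\Gamma(\mu)}{\Gamma(\mu+1-\alpha)},\qquad \mu>0,
\]
obtained by the substitution $s=r+(t-r)\tau$. Writing $\beta:=1-\alpha\in(0,1]$, an induction based on this identity shows that $B^k$ has kernel $b^k\,\frac{\Gamma(\beta)^k}{\Gamma(k\beta)}(t-s)^{k\beta-1}$, so that the singularity exponent improves from $-\alpha$ to $k\beta-1$ after $k$ steps, and that applied to the constant $a$ one gets the explicit expression
\[
(B^k a)(t)=a\,\frac{\bigl(b\,\Gamma(\beta)\bigr)^k}{\Gamma(k\beta+1)}\,t^{k\beta}.
\]
At this point there are two equivalent ways to conclude: one may recognize the full series $\sum_{k\ge0}(B^ka)(t)$ as the Mittag--Leffler value $a\,E_{\beta}\!\bigl(b\Gamma(\beta)t^{\beta}\bigr)$ and invoke its exponential asymptotics, or one may stop the iteration early. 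I would take the second, more self-contained route: choose $n=\lceil 1/\beta\rceil$, so that the kernel of $B^n$ has nonnegative exponent $n\beta-1\ge0$ and is therefore bounded on each compact $[0,T']\subset[0,T)$. Then the remainder term satisfies $(B^n f)(t)\le b'\int_0^t f(s)\,ds$ for a constant $b'=b'(b,\alpha)$, and the hypothesis becomes the \emph{nonsingular} integral inequality $f(t)\le Q(t)+b'\int_0^t f(s)\,ds$, where $Q(t)=\sum_{k=0}^{n-1}(B^ka)(t)$ is a finite sum of nonnegative, nondecreasing terms.

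The classical ($\alpha=0$) Gronwall lemma applied to this nonsingular inequality gives $f(t)\le Q(t)\,e^{b't}$; since $Q(t)=a\sum_{k=0}^{n-1}c_k(b,\alpha)\,t^{k\beta}$ is a polynomial-type function with coefficients depending only on $b$ and $\alpha$, it is bounded by $aC_0\,e^{\varepsilon t}$, whence $f(t)\le aC_0\,e^{(b'+\varepsilon)t}$. The final, purely bookkeeping step is to absorb this into the stated form $f(t)\le aC e^{bCt}$ by taking $C=C(b,\alpha)$ large enough that $C\ge C_0$ and $bC\ge b'+\varepsilon$; all quantities involved depend only on $b$ and $\alpha$, as required. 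When $\alpha=0$ no iteration is needed and the inequality is exactly the classical Gronwall bound $f(t)\le a\,e^{bt}$, which gives $C(b,0)=1$ and matches the final assertion. The main obstacle is the inductive verification of the iterated-kernel formula via the Beta identity, together with the care needed to force the estimate into the precise exponential form with a single constant $C$ depending only on $b$ and $\alpha$; the analytic content beyond that is routine.
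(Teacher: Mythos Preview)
The paper does not actually prove this lemma; it simply cites Lemma~7.1.1 in Henry's monograph. Your iteration scheme together with the Beta-function identity for the iterated kernels is exactly the standard argument (and is essentially what Henry does), so in that sense your proposal goes well beyond what the paper offers and the overall strategy is sound.

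There is, however, a real gap in the second route you elect to finish with. After $n=\lceil 1/\beta\rceil$ iterations the kernel of $B^n$ is $c_n(t-s)^{n\beta-1}$ with $n\beta-1\ge 0$, but unless $1/\beta$ happens to be an integer you have $n\beta-1>0$ strictly, and then $(t-s)^{n\beta-1}$ is \emph{not} bounded by a constant independent of $t$. Your own phrasing (``bounded on each compact $[0,T']$'') already signals this, yet you then claim $b'=b'(b,\alpha)$; in fact one only gets $b'=b'(b,\alpha,T')$. Feeding that into the classical Gronwall step and optimizing in $T'=t$ produces a bound of the type $Q(t)\exp\bigl(c\,t^{n\beta}\bigr)$, which is super-exponential whenever $n\beta>1$ and cannot be forced into the stated form $aC e^{bCt}$ with $C$ independent of the time horizon. (The separate case $b=0$, where the inequality is trivially $f\le a$, should also be dispatched explicitly before dividing by $b$ in the final bookkeeping.)

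The fix is the first option you mentioned but set aside: let $n\to\infty$, show $(B^nf)(t)\to 0$ for each fixed $t$ (boundedness of $f$ on $[0,t]$ together with Stirling on $\Gamma(n\beta+1)$ does this), and obtain $f(t)\le a\,E_\beta\bigl(b\Gamma(\beta)\,t^\beta\bigr)$. The standard Mittag--Leffler asymptotics $E_\beta(x)\le C_\beta\exp\bigl(x^{1/\beta}\bigr)$ then give $f(t)\le aC_\beta\exp\bigl((b\Gamma(\beta))^{1/\beta}t\bigr)$, which is of the required form with $C=C(b,\alpha)$ and specializes to $C=1$ when $\alpha=0$.
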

\begin{proof}
For proof (under more general assumptions) see Lemma 7.1.1 in \cite{Hen}.
\end{proof}

\section{Proof of Theorem \ref{equilibrium}}

For $x\geq 0$ let $f(x)=\frac{\epsilon}{\delta+\epsilon+x}$. 
Consider the operator $T:L_{p}(\Omega)_{+}\to L_{p}(\Omega)$, defined by $T(u)=w$ where $w\in W^{2}_{p}(\Omega)$ is the unique solution of
\bs{\label{steady'}}
\eq{
-D\Delta w+f(u)w&=0&&, x\in\Omega\label{steady':a}\\ 
-D\nabla_n w&=-\nu&&,x\in\Gamma_{N}\label{steady':b}\\
w&=0&&,x\in\Gamma_{D}\label{steady':c}
} 
\es
We will show that $T$ has bounded range in $L_{p}(\Omega)_{+}$, is compact and continuous (this via the Schauder theorem will imply existence of a solution of \eqref{steady} in $W^{2}_{p}(\Omega)$). Using the fact that $0\leq f(x)\leq\frac{\epsilon}{\epsilon+\delta}$ we get from maximal regularity of uniformly elliptic differential operators in Sobolev spaces (see \cite{Gri} for instance) the following estimate
$$
\n{w}_{W^{2}_{p}(\Omega)}\leq C\n{\nu}_{W^{1-1/p}_{p}(\Gamma_{N})}
$$
which gives boundedness of the range of $T$ in $W^{2}_{p}(\Omega)$ and therefore in $L_{p}(\Omega)$. Compactness of $T$ follows from the compact imbedding $W^{2}_{p}(\Omega)\subset\subset L_{p}(\Omega)$. To show that $w\geq 0$ we multiply \eqref{steady':a} by $w_{-}$ and integrate by parts (notice that for $p>n$ $w\in W^2_p(\Omega)\subset W^1_2(\Omega)$ hence $w_-\in W^1_2(\Omega)$) to obtain
$$
-D\int_{\Omega}|\nabla w_-|^2-\int_{\Gamma_{N}}\nu w_{-}-\int_{\Omega}f(u)w_{-}^2=0.
$$
Since $w=0$ on $\Gamma_{D}$ therefore $w\geq0$ in $\Omega$.\\
Assume that $u_{n}\to u$ in $L_{p}(\Omega)$. Let $w=T(u), w_{n}=T(u_{n})$, then 
\begin{align*}
-D\Delta (w_{n}-w)+f(u_{n})(w_{n}-w)+w(f(u_{n})-f(u))&=0&&,x\in\Omega\\
-D\nabla_{n}(w_{n}-w)&=0&&, x\in\Gamma_{N}\\
w_{n}-w&=0&&, x\in\Gamma_{D}
\end{align*}
therefore
$$
\n{w_{n}-w}_{L_{p}(\Omega)}\leq C\n{w(f(u_{n})-f(u))}_{L_{p}(\Omega)}\leq C\n{w}_{L_{\infty}(\Omega)}\n{f'}_{L_{\infty}(0,\infty)}\n{u_n-u}_{L_p(\Omega)}$$ 
which proves that $T$ is continuous. 
Using Schauder fixed point theorem we obtain existence of $l_{\infty}\in W^{2}_{p}(\Omega)$ which solves \eqref{steady}. \\
To prove uniqueness, assume that $l^{1}_{\infty},l^{2}_{\infty}$ are solutions of \eqref{steady}. Subtracting equations \eqref{steady:a} for $l^{1}_{\infty},l^{2}_{\infty}$, multiplying by $l^{1}_{\infty}-l^{2}_{\infty}$, integrating by parts and using the monotonicity of function $\mathbb{R}_{+}\ni x\to xf(x)$ we get
$$
-D\int_{\Omega}|\nabla (l^{1}_{\infty}-l^{2}_{\infty})|^2  = \int_{\Omega}(f(l^{1}_{\infty})l^{1}_{\infty}-f(l^{2}_{\infty})l^{2}_{\infty})(l^{1}_{\infty}-l^{2}_{\infty})\geq0,
$$
which by \eqref{steady:c} implies $l^{1}_{\infty}\equiv l^{2}_{\infty}$.

\section{Proof of Theorem \ref{global}}

To deal with nonhomogeneous boundary condition on $\Gamma_{N}$ we subtract from $(l,s)$ the stationary state $ (l_{\infty},s_{\infty})$. Setting $(z_1,z_2)=(l-l_{\infty},s-s_{\infty})$ we arrive at
\bs\label{modelz}
\eq{
\partial_{t}z_1-D\Delta z_1&=\delta z_2-z_1(1-z_2)+s_{\infty}z_1+l_{\infty}z_2 && ,(t,x)\in(0,\infty)\times\Omega\label{modelz:a}\\
\partial_{t}z_2&=-(\delta+\epsilon)z_2+z_1(1-z_2) -s_{\infty}z_1-l_{\infty}z_2&& ,(t,x)\in(0,\infty)\times\Omega\label{modelz:b}\\
-D\nabla_{n}z_1&=0 && ,(t,x)\in(0,\infty)\times\Gamma_{N}\\
z_1&=0 && ,(t,x)\in(0,\infty)\times\Gamma_{D}\\
z_1(0)&=z_{10}=l_{0}-l_{\infty}&& ,x\in\Omega\\
z_2(0)&=z_{20}=s_{0}-s_{\infty}&& ,x\in\Omega
} 
\es
\noindent
We interpret system \eqref{modelz} as a differential equation in a Banach space specified below
\bs\label{ODE}
\eq{
\dot{z}-\mathcal{A}z&=H(z)&&,t\in(0,\infty)\label{ODE:a}\\
z(0)&=z_{0}=(z_{10},z_{20})&&\label{ODE:a} 
}
\es
where $z=(z_1,z_2),\ \mathcal{A}z=(D\Delta z_1,0),\ H=(H^1,H^2),$
\bs\label{H}
\eq{
H^1(z)&=\delta z_2-z_1(1-z_2)+s_{\infty}z_1+l_{\infty}z_2\\
H^2(z)&=-(\delta+\epsilon)z_2+z_1(1-z_2) -s_{\infty}z_1-l_{\infty}z_2.
}
\es

In the following lemma we prove local existence for \eqref{ODE}.
$\\$

\begin{lem}\label{local}
For $\alpha\in\{0,1,2,3\}$ denote $Z_{\alpha,p}=W^{\alpha}_{p,\mathcal{B}^{\alpha}}\times W^{1}_{p,\mathcal{B}^1}$. For every $z_{0}\in Z_{1,p}$ the Cauchy problem \eqref{ODE} possess a unique maximal local solution
\begin{align*}
z\in\mathcal{C}([0,T_{\max});Z_{1,p})\cap\mathcal{C}^1((0,T_{\max});Z_{1,p})\cap\mathcal{C}((0,T_{\max});Z_{3,p}).
\end{align*}
which satisfies for $t\in[0,T_{\max})$ the following Duhamel formula:
\bs\label{Duhamel}
\eq{
z_1(t)&=e^{tD\Delta}z_{10}+\int_0^t e^{(t-s)D\Delta}H^1(z(s))ds\label{Duhamel1}\\
z_2(t)&=z_{20}+\int_0^t H^2(z(s))ds\label{Duhamel2}.
}
\es

Moreover if $T_{\max}<\infty$ then $\limsup_{t\to T_{\max}^{-}}\n{z(t)}_{1,p}=\infty$.
\end{lem}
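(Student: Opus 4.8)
The plan is to cast the problem as an abstract semilinear parabolic equation and apply the standard local existence theory (as in Henry \cite{Hen} or Lunardi \cite{Lun}). The operator $\mathcal{A}z=(D\Delta z_1,0)$ is the generator of an analytic semigroup on $Z_{0,p}=L_p\times W^1_{p,\mathcal{B}^1}$: its first component is $D$ times the Laplacian from Lemma \ref{semigroup}, and its second component is zero (hence trivially the generator of the identity semigroup $z_2\mapsto z_2$), so the product operator generates $e^{t\mathcal{A}}(z_1,z_2)=(e^{tD\Delta}z_1,z_2)$, which is analytic on the product space. The interpolation/fractional-power scale of $\mathcal{A}$ is exactly the family $Z_{\alpha,p}$, with $Z_{1,p}$ playing the role of the relevant fractional-power space in which we seek the solution.

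First I would verify that the nonlinearity $H=(H^1,H^2)$ defined in \eqref{H} is locally Lipschitz from $Z_{1,p}$ into $Z_{0,p}$. Since $H$ is quadratic in $(z_1,z_2)$ with smooth bounded coefficients $s_\infty,l_\infty\in W^2_p\subset L_\infty$ (by Theorem \ref{equilibrium} and $p>n$), the only point to check is that products such as $z_1z_2$ are controlled. Here the embedding $W^1_p(\Omega)\subset L_\infty(\Omega)$ for $p>n$ is the key tool: it makes $W^1_p$ a Banach algebra, so $z_1z_2\in W^1_p$ and the map is bilinear-bounded, giving local Lipschitz continuity on bounded sets of $Z_{1,p}$. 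With this in hand, the abstract theorem yields a unique maximal mild solution $z\in\mathcal{C}([0,T_{\max});Z_{1,p})$ satisfying the Duhamel formulas \eqref{Duhamel1}--\eqref{Duhamel2}; note \eqref{Duhamel2} is just the integrated form of the ODE \eqref{modelz:b}, since the second component of $e^{t\mathcal{A}}$ is the identity.

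Next I would establish the additional regularity $z\in\mathcal{C}^1((0,T_{\max});Z_{1,p})\cap\mathcal{C}((0,T_{\max});Z_{3,p})$ by a bootstrap using the smoothing estimates \eqref{estimate:1}. The parabolic smoothing of $e^{tD\Delta}$ instantly lifts $z_1$ into higher Sobolev spaces: starting from $z\in Z_{1,p}$ one gains regularity for $H^1(z)$, then applies \eqref{estimate:1} to the Duhamel integral to push $z_1$ into $W^3_{p,\mathcal{B}^3}$, while $z_2$ gains regularity directly from \eqref{Duhamel2} since its integrand inherits the regularity of $z_1$. Standard analytic-semigroup arguments (Hölder continuity of $H(z(\cdot))$ in time, then $z_1$ solves the inhomogeneous heat equation classically) upgrade the mild solution to a strong one, giving the $\mathcal{C}^1$ time regularity. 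The blow-up alternative $\limsup_{t\to T_{\max}^-}\n{z(t)}_{1,p}=\infty$ when $T_{\max}<\infty$ is the standard continuation criterion for the abstract equation and follows automatically from the maximality construction.

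The main obstacle I expect is the verification that $H$ maps $Z_{1,p}$ into $Z_{0,p}$ respecting the boundary conditions encoded in the spaces, together with the bootstrap to $Z_{3,p}$. The delicate point is that $W^3_{p,\mathcal{B}^3}$ carries the compatibility condition $\Delta z_1|_{\Gamma_D}=0$ in addition to the Dirichlet and Neumann traces; one must check that the Duhamel term $\int_0^t e^{(t-s)D\Delta}H^1(z(s))\,ds$ actually lands in this space, i.e.\ that the nonlinear forcing is compatible with all three boundary operators at the relevant order. This is precisely where the structure of the interpolation scale $[L_p,W^2_{p,\mathcal{B}^2}]_{\theta}=W^{2\theta}_{p,\mathcal{B}^{2\theta}}$ from \cite{Ama} (invoked in Lemma \ref{semigroup}) does the work: it guarantees that the semigroup maps into the boundary-condition-respecting spaces at each level of the scale, so the regularity gain is consistent with the hierarchy of boundary conditions and no compatibility is violated.
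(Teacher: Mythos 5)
Your proposal is correct in substance and sits in the same general framework as the paper's proof: an abstract semilinear parabolic equation for the product operator $\mathcal{A}$, the Banach algebra property of $W^1_p$ for $p>n$ as the key structural fact, and a citation of standard local existence theory. The difference is in where you let the nonlinearity land. You take $H\colon Z_{1,p}\to Z_{0,p}$ and view $Z_{1,p}$ as a fractional power space of exponent $1/2$; the paper instead observes that $H$ is locally Lipschitz from $Z_{1,p}$ \emph{into} $Z_{1,p}$ (your own Banach-algebra computation proves exactly this, since every term of $H$ in \eqref{H} is a product of $W^1_p$ functions vanishing on $\Gamma_D$, and $l_\infty,s_\infty$ also vanish there), and then applies Theorem 7.2.1 of \cite{ChD} with $Z_{1,p}$ itself as the base space, on which $\mathcal{A}$ generates an analytic semigroup with domain $Z_{3,p}$. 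That choice buys the whole statement in one stroke: continuity into $Z_{1,p}$, differentiability into $Z_{1,p}$ (not merely into $Z_{0,p}$), continuity into $Z_{3,p}$ --- with the compatibility condition $\Delta z_1|_{\Gamma_D}=0$ automatically encoded in the domain of $\Delta$ on $W^1_{p,\mathcal{B}^1}$ --- together with the Duhamel formula and the blow-up alternative in the $\n{\cdot}_{1,p}$ norm. Your route delivers directly only $\mathcal{C}^1((0,T_{\max});Z_{0,p})\cap\mathcal{C}((0,T_{\max});Z_{2,p})$, so the bootstrap you sketch is genuinely needed, and it also requires the identification of the fractional power space $D((\lambda-\mathcal{A})^{1/2})$ with $Z_{1,p}$, which is an extra fact to justify (the paper only ever uses the interpolation identities of Lemma \ref{semigroup}). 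Be aware, too, that the bootstrap cannot be run by estimating the Duhamel integrand in $W^3_p$ via \eqref{estimate:1}: that gives $\n{e^{(t-s)D\Delta}H^1(z(s))}_{3,p}\leq C(t-s)^{-1}\n{H^1(z(s))}_{1,p}$, whose singularity is not integrable. The correct mechanism is the one you name only parenthetically: local H\"older continuity of $t\mapsto H(z(t))$ with values in $W^1_{p,\mathcal{B}^1}$ (a consequence of semigroup smoothing plus the algebra property), followed by the linear theory of strict solutions for the inhomogeneous heat equation on the base space $W^1_{p,\mathcal{B}^1}$.

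One claim in your bootstrap is false, though harmlessly so: $z_2$ does \emph{not} gain spatial regularity from \eqref{Duhamel2}, because $H^2(z)$ contains $z_2$ itself and is therefore never smoother in $x$ than $z_2\in W^1_p$. Fortunately no gain is needed: by the definition $Z_{\alpha,p}=W^{\alpha}_{p,\mathcal{B}^{\alpha}}\times W^1_{p,\mathcal{B}^1}$, the second component is only ever required to lie in $W^1_{p,\mathcal{B}^1}$, and \eqref{Duhamel2} must only produce $z_2\in\mathcal{C}^1$ in time with values in that space, which follows since $s\mapsto H^2(z(s))$ is continuous into $W^1_{p,\mathcal{B}^1}$.
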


\begin{proof}

The operator $\mathcal{A}:Z_p\supset Z_{2,p}\to Z_p$ is a generator of an analytic strongly continuous semigroup $e^{t\mathcal{A}}=e^{tD\Delta}\times Id$ (as a product of two generators). Moreover since $Z_{1,p}$ is a Banach algebra ($p>n$) we observe that $H:Z_{1,p}\to Z_{1,p}$ is locally Lipschitz on bounded sets. The claim follows from Theorem 7.2.1 in \cite{ChD}.
\end{proof}

We next turn to the proof of \eqref{Sign}.

To prove that for $t\in[0,T_{\max})$ $l(t),s(t)\geq 0$ we consider the system
\bs\label{model+}
\eq{
\partial_{t}l'-D\Delta l'&=\delta s'_{+}-l'_{+}(1-s'_{+})&& ,(t,x)\in(0,\infty)\times\Omega\label{model+:a}\\
\partial_{t}s'&=-(\delta+\epsilon)s'_{+}+l'_{+}(1-s'_{+})&& ,(t,x)\in(0,\infty)\times\Omega\label{model+:b}\\
-D\nabla_{n}l'&=-\nu&& ,(t,x)\in(0,\infty)\times\Gamma_{N}\label{model+:c}\\
l'&=0&& ,(t,x)\in(0,\infty)\times\Gamma_{D}\label{model+:d}\\
l'(0)&=l_{0}&& ,x\in\Omega\label{model+:e}\\
s'(0)&=s_{0}&&, x\in\Omega\label{model+:f}
}
\es
As before one can show that \eqref{model+} possess unique classical local solution $(l',s')$. After multiplying \eqref{model+:a} by $l_{-}$ and integrating by parts we obtain
\begin{align*}
-\frac{1}{2}\frac{d}{dt}\int_{\Omega}|l'_{-}|^2dx-D\int_{\Omega}|\nabla l'_{-}|^2dx-D\int_{\Gamma_{N}}l'_{-}\nu dS=\delta\int_{\Omega}s'_{+}l'_{-}dx\geq0.
\end{align*}
Similarly multiplying \eqref{model+:b} by $s_{-}$ yields
\begin{align*}
-\frac{1}{2}\frac{d}{dt}\int_{\Omega}|s'_{-}|^2dx=\int_{\Omega}l'_{+}s_{-}'dx\geq0.
\end{align*}

Therefore for $t\in[0,T_{\max})$
\begin{align*}
\n{l'(t)_{-}}_{2}^2+\n{s'(t)_{-}}_{2}^2\leq\n{l'(0)_{-}}_{2}^2+\n{s'(0)_{-}}_{2}^2=0
\end{align*}
and consequently $l'(t)\geq0,s'(t)\geq0$. We observe now that $(l',s')$ is a solution of \textbf{[LNW].B} and using uniqueness we finally get that $l(t)=l'(t)\geq0, s(t)=s'(t)\geq0$ for $t\in[0,T_{\max})$.

To show that $s(t,x)<1$ for $(t,x)\in[0,T_{\max})\times\overline{\Omega}$ we get from Lemma \ref{local}, that for every fixed $x\in\overline{\Omega}$ the function $\underline{s}=1-s=1-z_2-s_\infty\in\mathcal{C}^{1}([0;T_{\max}),\mathbb{R})$ satisfies for $t>0$ the following ODE
\begin{align*}
\underline{\dot{s}}+(\delta+\epsilon+l)\underline{s}&=\delta+\epsilon.
\end{align*}
Therefore
\begin{align*}
\underline{s}(t)=e^{-(\delta+\epsilon)t-\int_{0}^{t}l(\tau)d\tau}(1-s_{0})+(\delta+\epsilon)\int_{0}^{t}e^{-(\delta+\epsilon)(t-t')-\int_{0}^{t-t'}l(\tau)d\tau}dt'>0.
\end{align*}

We finally show that $T_{\max}=\infty$.
Reasoning by contradiction assume that $T_{\max}<\infty$.
Using uniform $L_{\infty}$ boundedness of $s$ (and therefore of $z_2$) we obtain for $t\in(0,T_{\max})$:
\eq{
\n{H^1(z(t))}_{p}\leq C(1+\n{z_1(t)}_{p})\leq C(1+\n{z_1(t)}_{1,p}).\label{H1_estimate}
}
Using \eqref{Duhamel1},\eqref{estimate:1},\eqref{H1_estimate} we obtain
\begin{align*}
\n{z_1(t)}_{1,p}&\leq\n{e^{tD\Delta}z_{10}}_{1,p}+\int_{0}^{t}\n{e^{(t-\tau)D\Delta}H^1(z(\tau))}_{1,p}d\tau\\
&\leq C\n{z_{10}}_{1,p}+C\int_{0}^{t}(t-\tau)^{-1/2}\n{H^1(z(t))}_{p}d\tau\\
&\leq C\n{z_{10}}_{1,p}+C\int_{0}^{t}(t-\tau)^{-1/2}(1+\n{z_1(\tau)}_{1,p})d\tau\\
&\leq C(\n{z_{10}}_{1,p}+1)+C\int_{0}^{t}(t-\tau)^{-1/2}\n{z_1(\tau)}_{1,p}d\tau
\end{align*}
Using Lemma \ref{Gronwall} we get
that $\n{z_1(t)}_{1,p}\leq C$ and therefore 
\eq{
\n{H^2(z(t))}_{1,p}\leq C(1+\n{z_2(t)}_{1,p})\label{H2_estimate}.
}
Using \eqref{Duhamel2} and \eqref{H2_estimate} we obtain
\begin{align*}
\n{z_2(t)}_{1,p}&\leq\n{z_{20}}_{1,p}+\int_{0}^{t}\n{H^2(z(\tau))}_{1,p}d\tau\leq\n{z_{20}}_{1,p}+C\int_{0}^{t}(1+\n{z_2(\tau)}_{1,p})d\tau\\
&\leq C(\n{z_{20}}_{1,p}+1)+C\int_{0}^{t}\n{z_2(\tau)}_{1,p}d\tau.
\end{align*}
Another application of Lemma \ref{Gronwall} gives desired contradiction from which we deduce that $T_{\max}=\infty$.

\section{Proof of theorem \ref{asymptotics}}
The proof of Theorem \ref{asymptotics} is based on $L_2$ estimates obtained for $n=1$ in \cite{KLW1} and bootstrap method to improve convergence from $X_i$-topology to $X_{i+1}$-topology, where $X_{i+1}\subset X_{i}$ are appropriately chosen Banach spaces. We use (as long as the regularity of our solution permits) the following two step 

\textbf{Bootstrap scheme} 
\begin{enumerate}
\item $\n{z_1(t)}_{X_i}+\n{z_2(t)}_{X_i}\leq Ce^{-(\chi/2)t}$ gives $\n{z_1(t)}_{X_{i+1}}\leq Ce^{-(\chi/2)t}$.
\item $\n{z_1(t)}_{X_{i+1}}\leq Ce^{-(\chi/2)t}$ gives $\n{z_2(t)}_{X_{i+1}}\leq Ce^{-(\chi/2)t}$.
\end{enumerate}

Step 1. is a consequence of Duhamel formula \eqref{Duhamel1} and semigroup estimates \eqref{estimates}. \\Step 2. follows from the fact that we can solve equation \eqref{modelz:b} explicitly for $z_2$ in terms of $z_1$.

\subsection{$L_{2}$ estimate}

We first show that, as in the one dimensional case \textbf{[LNW].B} has a Lyapunov functional from which exponential convergence to the equlibrium  $(l_{\infty},s_{\infty})$ follows.
\
\newline
\begin{lem}
For $x\in[0,1), u,v\in W^1_{p,\mathcal{B}^1}(\Omega), 0\leq v<1$, define
\begin{align*}
\Sigma_{I}(x)&=-\ln(1-x)\\
\Lambda_{0}(v)&=\int_{\Omega}(1-s_{\infty})(l_{\infty}+\delta+2\epsilon)\Big[\Sigma_{I}(v)-\Sigma_{I}(s_{\infty})-\frac{v-s_{\infty}}{1-s_{\infty}}\Big]dx\\
\Lambda(u,v)&=\frac{1}{2}\n{u-l_{\infty}}_{2}^2+\Lambda_{0}(v)\\
\mathcal{D}_{\Lambda}(u,v)&=D\n{\nabla (u-l_{\infty})}_{2}^{2}+\int_{\Omega}\frac{[u(1-v)-(\delta+\epsilon)v]^{2}+\epsilon(l_{\infty}+\delta+\epsilon)(v-s_{\infty})^2}{1-v}dx.
\end{align*}
Then for $t\geq0$
\begin{align*}
\Lambda(l(t),s(t))+\int_{0}^{t}\mathcal{D}_{\Lambda}(l(\tau),s(\tau))d\tau&=\Lambda(l_{0},s_{0})\\
\chi\Lambda(l(t),s(t))&\leq\mathcal{D}_{\Lambda}(l(t),s(t))\\
(\delta+\epsilon)\n{s(t)-s_{\infty}}_{2}^2&\leq2\Lambda_{0}(s(t))
\end{align*}
and
\begin{align}
\n{l(t)-l_{\infty}}_{2}^2+(\delta+\epsilon)\n{s(t)-s_{\infty}}_{2}^2&\leq2\Lambda(l_0,s_0)e^{-\chi t}\label{KLW},
\end{align}
where $\chi$ satisfies \eqref{chi}.

\end{lem}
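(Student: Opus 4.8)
The plan is to prove the four assertions in turn and then read off \eqref{KLW}. Write $z_1=l-l_\infty$, $z_2=s-s_\infty$ as in \eqref{modelz} and put $Q:=l(1-s)-(\delta+\epsilon)s$; note that $Q=\partial_t s$ and that $Q^2$ is the first term in the numerator of $\mathcal{D}_\Lambda$. Subtracting the stationary relations $l_\infty(1-s_\infty)=(\delta+\epsilon)s_\infty$ and $-D\Delta l_\infty=\delta s_\infty-l_\infty(1-s_\infty)$ from \textbf{[LNW].B} yields the two identities
\[ Q=z_1(1-s)-(l_\infty+\delta+\epsilon)z_2, \qquad \partial_t z_1-D\Delta z_1=-Q-\epsilon z_2, \]
on which the whole computation rests. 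First I would establish the energy identity: multiplying the $z_1$-equation by $z_1$ and integrating by parts, the boundary terms drop ($z_1=0$ on $\Gamma_D$, $\nabla_n z_1=0$ on $\Gamma_N$), giving $\tfrac{d}{dt}\tfrac12\n{z_1}_2^2=-D\n{\nabla z_1}_2^2+\int_\Omega z_1(-Q-\epsilon z_2)$. Since $\Sigma_I'(s)=(1-s)^{-1}$, differentiating the entropy term gives $\tfrac{d}{dt}\Lambda_0(s)=\int_\Omega(l_\infty+\delta+2\epsilon)\tfrac{z_2}{1-s}Q$. Adding these and substituting $z_1(1-s)=Q+(l_\infty+\delta+\epsilon)z_2$ to clear the factor $1-s$, the $z_2Q$ terms cancel and the integrand collapses to $-[Q^2+\epsilon(l_\infty+\delta+\epsilon)z_2^2]/(1-s)$; thus $\tfrac{d}{dt}\Lambda=-\mathcal{D}_\Lambda$, and the first identity follows on integrating in $t$. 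These steps are justified by the regularity in Theorem \ref{global} and by the fact that $s$ is continuous with $s<1$, whence $1-s\ge c(T)>0$ on each $[0,T]$, so every integrand is integrable.

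For $(\delta+\epsilon)\n{z_2}_2^2\le 2\Lambda_0$ I would use convexity. Writing $\Lambda_0=\int_\Omega(1-s_\infty)(l_\infty+\delta+2\epsilon)g(s)$ with $g(s)=\Sigma_I(s)-\Sigma_I(s_\infty)-\tfrac{z_2}{1-s_\infty}$, one has $g(s_\infty)=g'(s_\infty)=0$ and $g''(s)=(1-s)^{-2}\ge1$ on $[0,1)$, so Taylor's theorem gives $g(s)\ge\tfrac12 z_2^2$ (and $g\ge0$, hence $\Lambda_0\ge0$). Since $(1-s_\infty)(l_\infty+\delta+2\epsilon)=(\delta+2\epsilon)-\epsilon s_\infty\ge\delta+\epsilon$ (using $l_\infty(1-s_\infty)=(\delta+\epsilon)s_\infty$ and $s_\infty<1$), this yields $\Lambda_0\ge\tfrac{\delta+\epsilon}{2}\n{z_2}_2^2$.

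The dissipation inequality $\chi\Lambda\le\mathcal{D}_\Lambda$ is where the real work lies. The complementary bound is $g(s)\le(1-u)^2/u$ with $u=\tfrac{1-s}{1-s_\infty}$, i.e. $\Lambda_0\le\int_\Omega(l_\infty+\delta+2\epsilon)\tfrac{z_2^2}{1-s}$, verified by checking that $\tfrac{(u-1)^2}{u}-(u-1-\ln u)$ has a global minimum $0$ at $u=1$. It then suffices to prove
\[ D\n{\nabla z_1}_2^2+\int_\Omega\frac{Q^2}{1-s}+\epsilon\int_\Omega\frac{(l_\infty+\delta+\epsilon)z_2^2}{1-s}\ \ge\ \chi\Big(\tfrac12\n{z_1}_2^2+\int_\Omega(l_\infty+\delta+2\epsilon)\tfrac{z_2^2}{1-s}\Big), \]
whose left side is exactly $\mathcal{D}_\Lambda$ and whose right side dominates $\chi\Lambda$. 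Expanding $Q^2=(z_1(1-s)-(l_\infty+\delta+\epsilon)z_2)^2$ produces the indefinite cross term $-2\int_\Omega(l_\infty+\delta+\epsilon)z_1z_2$. The crucial difficulty, which I expect to be the hardest point, is that $\chi$ in \eqref{chi} is independent of $l_\infty$, so this cross term cannot be treated by a crude Young estimate against $\n{z_1}_2^2$ (which would bring in $\n{l_\infty}_\infty$). Instead one must retain the perfect square $Q^2/(1-s)$, using it at once to absorb the cross term and to release additional $z_2^2/(1-s)$-dissipation, while the Poincar\'e inequality $D\n{\nabla z_1}_2^2\ge D\lambda_1\n{z_1}_2^2$ supplies the decay of the $z_1$-energy. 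Balancing the single free parameter that apportions the coupling between the $D\lambda_1\n{z_1}_2^2$ and $z_2^2/(1-s)$ budgets, arranged so that every $l_\infty$-weight cancels, is what produces the two competitors in \eqref{chi}: $D\lambda_1$ governs the $z_1$-dominated direction and $\tfrac{D\lambda_1(\delta+\epsilon)}{2(D\lambda_1+2)}+\tfrac\epsilon2$ the $z_2$-dominated one, the $2$ in the denominator being traceable to a Cauchy--Schwarz split of $Q$.

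Finally I would assemble \eqref{KLW}. Combining the energy identity with $\chi\Lambda\le\mathcal{D}_\Lambda$ gives $\tfrac{d}{dt}\Lambda\le-\chi\Lambda$, i.e. $\tfrac{d}{dt}(e^{\chi t}\Lambda)\le0$, so $\Lambda(l(t),s(t))\le\Lambda(l_0,s_0)e^{-\chi t}$. Since $\tfrac{\delta+\epsilon}{2}\n{z_2}_2^2\le\Lambda_0$ by the third inequality,
\[ \tfrac12\n{z_1}_2^2+\tfrac{\delta+\epsilon}{2}\n{z_2}_2^2\le\tfrac12\n{z_1}_2^2+\Lambda_0=\Lambda\le\Lambda(l_0,s_0)e^{-\chi t}, \]
and multiplying by $2$ gives \eqref{KLW}.
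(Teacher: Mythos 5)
Your write-up should be compared against what the paper actually does here: the paper does not prove this lemma itself, it cites Theorem 8 and Proposition 9 of \cite{KLW1} and only adds the remark that for $n=1$, $p\in(1,2)$ one has $l(t)\in W^2_p(\Omega)\subset W^1_2(\Omega)$, justifying the integrations by parts and the Poincar\'e inequality. You instead attempt a self-contained reconstruction, and the parts you actually carry out are correct: the identity $\tfrac{d}{dt}\Lambda=-\mathcal{D}_\Lambda$ (the cancellation of the $z_2Q$ cross terms after substituting $z_1(1-s)=Q+(l_\infty+\delta+\epsilon)z_2$ checks out), the lower bound $\Lambda_0\ge\tfrac{\delta+\epsilon}{2}\n{s-s_\infty}_2^2$ via $g''\ge1$ and $(1-s_\infty)(l_\infty+\delta+2\epsilon)=\delta+2\epsilon-\epsilon s_\infty\ge\delta+\epsilon$, the upper bound $\Lambda_0\le\int_\Omega(l_\infty+\delta+2\epsilon)\frac{(s-s_\infty)^2}{1-s}\,dx$ via $u-1-\ln u\le (u-1)^2/u$, and the final Gronwall step.

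The genuine gap is the second assertion, $\chi\Lambda\le\mathcal{D}_\Lambda$ with the \emph{specific} $\chi$ of \eqref{chi}. This is the quantitative heart of the lemma, and your proposal stops exactly where you admit "the real work lies": after reducing to a correct sufficient inequality, you only describe in words a balancing of "the single free parameter", asserting that it produces the two competitors in \eqref{chi}. No parameter is exhibited, no estimate is performed, and a reader cannot verify that the stated rate (rather than some unnamed positive constant) emerges; in particular \eqref{KLW} with rate $\chi$ remains unproven. For the record, your strategy does complete. With $a=l_\infty+\delta+\epsilon$ and $Q=z_1(1-s)-az_2$, Young's inequality gives $(1-\eta)a^2z_2^2\le Q^2+\tfrac{1-\eta}{\eta}z_1^2(1-s)^2$, and since $0< 1-s\le 1$, Poincar\'e yields
\begin{align*}
\mathcal{D}_\Lambda\ \ge\ \Bigl(D\lambda_1-\tfrac{1-\eta}{\eta}\Bigr)\n{z_1}_2^2+\int_\Omega\bigl[(1-\eta)a^2+\epsilon a\bigr]\frac{z_2^2}{1-s}\,dx.
\end{align*}
Choosing $\tfrac{1-\eta}{\eta}=\tfrac{D\lambda_1}{2}$, so that $1-\eta=\tfrac{D\lambda_1}{D\lambda_1+2}$, the $z_1$-coefficient is $\tfrac{D\lambda_1}{2}\ge\tfrac{\chi}{2}$, while $a\ge\delta+\epsilon$ and $a+\epsilon\le 2a$ give $(1-\eta)a^2+\epsilon a\ge\bigl[\tfrac{D\lambda_1(\delta+\epsilon)}{2(D\lambda_1+2)}+\tfrac{\epsilon}{2}\bigr]\,2a\ge\chi(a+\epsilon)$, which is exactly the second competitor in \eqref{chi} and makes every $l_\infty$-weight cancel as you predicted. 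This half-page computation --- or the explicit citation of \cite{KLW1}, which is the paper's route --- is what is missing from your proof.
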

\begin{proof}
Proof can be obtained exactly as in \cite{KLW1} (part of Theorem 8 and Proposition 9 pp 1740-1744). For the case $n=1, p\in(1,2)$, to justify integration by parts and Poincar\'e inequality, we observe that for $t>0: l(t)\in W^2_p(\Omega)\subset W^1_2(\Omega)$. 
\end{proof} 

\subsection{$L_{p}$ estimate}
In this subsection we will prove that for $t\geq0$
\eq{
\n{z_1(t)}_p+\n{z_2(t)}_p\leq Ce^{-(\chi/2)t},\label{estimate:Lp}
}
the parameter $p$ being defined in $\textbf{A1}$.
\\
\newline
Notice that if $p\in (1,2]$ (which can only happen if $n=1$), the inequality \eqref{estimate:Lp} follows from \eqref{KLW}. 
\\
\newline
Otherwise we have $p>(2\vee n)$. We choose an increasing sequence $(p_i)_{i=1}^m$ such that 
\begin{align*}
p_{1}=2, p_{m}=p\\
n/2(1/p_i-1/p_{i+1})<1
\end{align*}
(notice that for $n\in\{1,2,3,4\}$ one can take $m=2$). 
Inductively we will prove that 
\eq
{
\n{z_1(t)}_{p_i}+\n{z_2(t)}_{p_i}\leq Ce^{-(\chi/2)t}, \ 1\leq i\leq m\label{estimate:Lpi}.
} 
For $i=1$ \eqref{estimate:Lpi} follows from \eqref{KLW}.
Assume that \eqref{estimate:Lpi} is true for some $1\leq i\leq m-1$. Then
\eq{
\n{H^1(z(t))}_{p_i}\leq\n{z_1}_{p_i}\n{1-z_2+s_{\infty}}_{\infty}+\n{z_2}_{p_i}\n{\delta+\epsilon+l_{\infty}}_{\infty}\leq Ce^{-(\chi/2)t}\label{H1Lp}.
}

Using \eqref{Duhamel1}, \eqref{estimate:2}, \eqref{H1Lp} and $\chi/2< D\lambda_1$ we obtain
\begin{align*}
\n{z_1(t)}_{p_{i+1}}&\leq\n{e^{tD\Delta}z_{10}}_{p_{i+1}}+\int_{0}^{t}\n{e^{sD\Delta}H^{1}(z(t-s))}_{p_{i+1}}ds\\
&\leq Ce^{-D\lambda_1t}+C\int_{0}^{t}(Ds\wedge1)^{-n/2(1/p_{i}-1/p_{i+1})}e^{-D\lambda_1s}\n{H^{1}(z(t-s))}_{p_{i}}ds\\
&\leq Ce^{-D\lambda_1t}+C\int_{0}^{t}(Ds\wedge1)^{-n/2(1/p_i-1/p_{i+1})}e^{-D\lambda_1s}e^{-(\chi/2)(t-s)}ds\\
&\leq Ce^{-D\lambda_1t}+Ce^{-(\chi/2)t}\int_{0}^{t}(Ds\wedge1)^{-n/2(1/p_i-1/p_{i+1})}e^{-(D\lambda_1-\chi/2)s}ds\\
&\leq Ce^{-(\chi/2)t}.
\end{align*}

To show that for $t>0 \ $ 
$\n{z_2(t)}_{p_{i+1}}\leq Ce^{-(\chi/2)t}$,
we obtain from Theorem \ref{global} that for each fixed $x\in\overline{\Omega}$ the function $z_{2}\in\mathcal{C}^1([0,\infty);\mathbb{R})$ satisfies the ODE
\begin{align*}
\dot{z}_2+(\delta+\epsilon+l_{\infty}+z_1)z_2&=(1-s_{\infty})z_1,
\end{align*}
hence
\eq{
z_{2}(t)=A(t)z_{20}+(1-s_{\infty})\int_{0}^{t}A(\tau)z_{1}(t-\tau)d\tau\label{ODEz2},
}
where 
\eq{
A(t)=\exp{\Big(-\int_{0}^{t}(\delta+\epsilon+l_{\infty}+z_{1}(\tau))d\tau\Big)}.
\label{At}
}
From $l_{\infty}+z_{1}=l\geq0$ we get $\n{A(t)}_{\infty}\leq e^{-(\delta+\epsilon)t}$. Using $\chi/2<\delta+\epsilon$ we obtain
\begin{align*}
\n{z_2(t)}_{p_{i+1}}&\leq\n{A(t)}_{\infty}\n{z_{20}}_{p_{i+1}}+\n{1-s_{\infty}}_{\infty}\int_0^t \n{A(\tau)}_{\infty}\n{z_{1}(t-\tau)}_{p_{i+1}}d\tau\\
&\leq Ce^{-(\delta+\epsilon)t}+Ce^{-(\chi/2)t}\int_{0}^{t}e^{-(\delta+\epsilon-\chi/2)\tau}d\tau\leq Ce^{-(\chi/2)t},
\end{align*}
thus finishing the proof of \eqref{estimate:Lpi}, whence that of \eqref{estimate:Lp}.
\\
\newline

In the next two sections we use the smoothing properties of $e^{t\Delta}$ to extend convergence to the first and second derivatives.

\subsection{$W^{1}_{p}$ estimate}
Using \eqref{Duhamel1}, \eqref{estimate:1}, \eqref{estimate:Lp} and $\chi/2<D\lambda_1$ we obtain
\begin{align*}
\n{z_1(t)}_{1,p}&\leq\n{e^{tD\Delta}z_{10}}_{1,p}
+\int_{0}^{t}\n{e^{sD\Delta}H^{1}(z(t-s))}_{1,p}ds\\
&\leq Ce^{-D\lambda_1t}+C\int_{0}^{t}(Ds\wedge 1)^{-1/2}e^{-\lambda_1Ds}\n{H^{1}(z(t-s))}_{p}ds\\
&\leq Ce^{-D\lambda_1t}+C\int_{0}^{t}(Ds\wedge 1)^{-1/2}e^{-\lambda_1Ds}e^{-(\chi/2)(t-s)}ds\\
&\leq Ce^{-D\lambda_1t}+Ce^{-(\chi/2)t}\int_{0}^{t}(Ds\wedge 1)^{-1/2}e^{-(D\lambda_1-\chi/2)s}ds\\
&\leq Ce^{-(\chi/2)t}.
\end{align*}

Using the above estimate for $z_1$ we obtain that $A(t)$ given by \eqref{At} satisfies
\begin{align*}
\n{A(t)}_p&\leq C\n{A(t)}_{\infty}\leq Ce^{-(\delta+\epsilon)t}\\
\n{\nabla A(t)}_p&=\n{-A(t)\int_{0}^{t}(\nabla l_{\infty}+\nabla z_{1}(\tau))d\tau}_p\leq\n{A(t)}_{\infty}\int_{0}^{t}(\n{\nabla l_{\infty}}_p+\n{\nabla z_{1}(\tau)}_p)d\tau\\
&\leq Ce^{-(\delta+\epsilon)t}\int_0^{t}(1+e^{-(\chi/2)\tau})d\tau\leq Cte^{-(\delta+\epsilon)t}.
\end{align*}
Thus using \eqref{ODEz2} we have
\begin{align*}
\n{z_2(t)}_{1,p}&\leq \n{A(t)}_{1,p}\n{z_{20}}_{1,p}+C\n{1-s_{\infty}}_{1,p}\int_{0}^{t}\n{A(\tau)}_{1,p}\n{z_{1}(t-\tau)}_{1,p}d\tau\\
&\leq C(t+1)e^{-(\delta+\epsilon)t}+C\int_{0}^{t}(\tau+1)e^{-(\delta+\epsilon)\tau}e^{-(\chi/2)(t-\tau)}d\tau\\
&\leq C(t+1)e^{-(\delta+\epsilon)t}+Ce^{-(\chi/2)t}\int_{0}^{t}(\tau+1)e^{-(\delta+\epsilon-\chi/2)\tau}d\tau\\
&\leq Ce^{-(\chi/2)t}
\end{align*}
which finishes the proof of \eqref{asym1}. 

\subsection{$W^2_p$ estimate for $z_1$}
Using \eqref{Duhamel1}, \eqref{estimate:1}, \eqref{asym1} and $\chi/2<D\lambda_1$ we obtain
\begin{align*}
\n{z_1(t)}_{2,p}&\leq\n{e^{tD\Delta}z_{10}}_{2,p}
+\int_{0}^{t}\n{e^{\tau D\Delta}H^{1}(z(t-\tau))}_{2,p}d\tau\\
&\leq C(Dt\wedge1)^{-1/2}e^{-D\lambda_1t}+C\int_{0}^{t}(D\tau\wedge 1)^{-1/2}e^{-\lambda_1D\tau}e^{-(\chi/2)(t-\tau)}ds\\
&\leq C(t\wedge1)^{-1/2}e^{-D\lambda_1t}+Ce^{-(\chi/2)t}\int_{0}^{t}(\tau\wedge 1)^{-1/2}e^{-(D\lambda_1-\chi/2)\tau}d\tau\\
&\leq C\max\{1/\sqrt{t},1\}e^{-(\chi/2)t},
\end{align*}

which finishes the proof of \eqref{asym2}. 

\section{Acknowledgement}
The author would like to express his gratitude towards his PhD supervisors Philippe Lauren\c{c}ot and Dariusz Wrzosek for their constant encouragement and countless helpful remarks and towards Christoph Walker for discussions on interpolation techniques and multiplication in Sobolev spaces. \\
The author was supported by the International Ph.D. Projects Programme of Foundation for Polish Science operated within the Innovative Economy Operational Programme 2007-2013 funded by European
Regional Development Fund (Ph.D. Programme: Mathematical Methods in Natural Sciences). \\
Part of this research was carried out during author's visit to the Institut de Math\'ematiques de Toulouse.


\begin{thebibliography}{9}


\bibitem[1]{Ama} H. Amann, \emph{Nonhomogeneous linear and quasilinear elliptic and parabolic boundary value problems}, 
Function Spaces, Differential Operators and Nonlinear Analysis. Teubner, Stuttgart, Leipzig, (1993), pp. 9-126.

\bibitem[2]{BKPG-GJ} T. Bollenbach, K. Kruse, P. Pantazis, M. Gonz\'alez-Gait\'an, F. J\"ulicher, \emph{Morphogen transport in Epithelia}, Physical Review E. 75, 011901 (2007)


\bibitem[3]{ChD} J. W. Cholewa, T. Dlotko, \emph{Global Attractors in Abstract Parabolic Problems}, London Mathematical Society Lecture Note Series 278, Cambridge University Press, (2000).

\bibitem[4]{Gri} P. Grisvard, \emph{Elliptic Problems in Nonsmooth Domains}, Pitman Advanced Publishing Program, (1985).

\bibitem[5]{GB} J. B. Gurdon, P.-Y. Bourillot, \emph{Morphogen gradient interpretation}, Nature, Vol. 413, (2001)


\bibitem[6]{Hen} D. Henry, \emph{Geometric Theory of Semilinear Parabolic Equations}, Lecture Notes in Mathematics, Springer-Verlag, (1981).

\bibitem[7]{KLW1} P. Krzy\.zanowski, P. Lauren\c{c}ot, D. Wrzosek, \emph{Well-posedness and convergence to the steady state for a model of morphogen transport}, SIAM Journal of Mathematical Analysis, Vol. 40, No. 5, (2008) pp. 1725-1749.

\bibitem[8]{KLW2} P. Krzy\.zanowski, P. Lauren\c{c}ot, D. Wrzosek, \emph{Mathematical models of receptor-mediated transport of morphogens}, Mathematical Models and Methods in Applied Sciences 20, (2010) pp. 2021-2052.

\bibitem[9]{KPBKBJG-G} A. Kicheva, P. Pantazis, T. Bollenbach, Y. Kalaidzidis, T. Bittig, F. J\"ulicher, M. Gonz\'alez-Gait\'an, \emph{Kinetics of Morphogen Gradient Formation}, Science, Vol. 315, (2007) pp. 521-525.

\bibitem[10]{KW} M. Kerszberg, L. Wolpert, \emph{Mechanisms for Positional Signalling by Morphogen Transport: a Theoretical Study}, Journal of Theoretical Biology, 191, (1998) pp. 103-114. 

\bibitem[11]{LNW} A. D. Lander, Q. Nie, Y. M. Wan, \emph{Do Morphogen Gradients Arise by Diffusion?}, Developmental Cell, Vol. 2, (2002) pp. 785-796.

\bibitem[12]{Lun} A. Lunardi, \emph{Analytic semigroups and optimal regularity in parabolic problems}, Progress in
Nonlinear Differential Equations and their Applications, Vol. 16, Birkhauser, Basel, Boston, (1995).

\bibitem[13]{STW} C. Stinner, J. I. Tello, M. Winkler, \emph{Mathematical analysis of a model of chemotaxis arising from morphogenesis}, submitted.

\bibitem[14]{Tel} J. I. Tello, \emph{Mathematical analysis of a model of morphogenesis}, Discrete and continuous dynamical systems, Vol. 25, No. 1 (2009) pp. 343-361.

\bibitem[15]{Wol} L. Wolpert \emph{Positional information and the spatial pattern of cellular differentiation}. J. Theor. Biol. 25 (1) (1969) pp. 1-47

\end{thebibliography}
\end{document}